\newcommand\numberthis{\addtocounter{equation}{1}\tag{\theequation}}
\newcommand{\bb}[1]{\mathbb{#1}}
\newcommand{\ra}{\rightarrow}
\newcommand{\pr}{\bb{P}}
\newcommand{\E}{\bb{E}}
\newcommand{\1}{\boldsymbol{1}}
\newtheorem{theorem}{Theorem}[section]
\newtheorem{lemma}[theorem]{Lemma}
\newtheorem{corollary}[theorem]{Corollary}
\crefname{assumption}{assumption}{assumptions}
\Crefname{assumption}{Assumption}{Assumptions}
\title{A Hard-core Stochastic Process \\ with Simultaneous Births and Deaths}
\author{Mayank Manjrekar}
\email[M.~Manjrekar]{mmanjrekar@math.utexas.edu}
\begin{document}
	\maketitle 
\begin{abstract}
	We consider a stochastic spatial point process with births and deaths on $\bb{R}^d$, with the hard-core property that at any time the balls of radius half centered at any two points do not overlap. We give an explicit construction of the process. Under some more conditions, we show the stochastic stability of this Markov process by constructing a stationary regime for the dynamics. The main tool used to construct the stationary regime is the coupling form the past technique.  Further, we study the exponential convergence of the probability distribution to the stationary distribution.
\end{abstract}
	\section{Introduction}
	Spatial birth-death (SBD) processes on $\bb{R}^d$ are Markov processes on locally finite subsets $\gamma$ of $\bb{R}^d$. These serve as models for populations of individuals where, in contrast to classical birth-death processes, each individual is modeled as a point in $\bb{R}^d$. Moreover, the rate at which new individuals are born and old ones die may be dependent on the current configuration of the points. 
	
	They can be formally be defined by giving the birth and death rate functions, which in turn specify the generator of the process of an appropriate domain. On a bounded domain, this is equivalent to specifying the Kolmogorov backward equation.
	These processes were first studied by Preston \cite{preston1975spatial}, in the case where the number of individuals is always finite. Using a coupling with a simple birth death process, Preston derived the conditions for existence and uniqueness for the Kolmogorov Backward equations.
	
	The definition can also be extended to  infinite domains. An SBD process on an infinite domain typically has the feature that locally the process looks like a jump process, while in any small time, almost surely, there is an event occurring somewhere in space (there is no first event). This is major obstruction to analysis of these processes using Preston's approach even for the existence problem.  There is no general procedure to construct spatial birth-death processes in the infinte domain, or to show the existence of stationary regimes. R. Holley and D. Strook \cite{holstr1978Nearest} first considered the process containing an infinite number of individuals on a real line. Stochastic equations for such process were formulated in \cite{garcia1995birth}, using a spatial version of the time-change equation. When solutions exists, these equations give a convenient Markov mapping representation of the process, which can be used to construct stationary regimes for example using a \emph{coupling from the past} argument. These ideas were developed further in \cite{garciakurtz2006spatial}, where conditions for existence of processes with uniform death rate are given. In \cite{baccelli2014onspatial}, the authors examine a dual of version, where the births occur at uniform rate and death occur due to pairwise interactions, according to a response function.
	
	In this paper, we develop a related class of spatial stochastic processes on infinite domains, called the hard-core model,  where points arrive and die over time as in SBD processes. New points arrive according to an arrival rate function, similar to the birth rate function in usual SBD processes. However, unlike SBD processes, arriving points may or may not be accepted. A point is accepted only after all points with distance less than or equal to $1$ from the new point are deleted at the instant of its arrival. Deletion of points (deaths) can only occur this way at times of arrival of new points. Thus at any time, no two points are within a distance $1$ from each other. Further, in contrast to SBD processes, a distinct feature of these processes is the presence of simultaneous deaths. In this paper, we derive results for hard-core processes where the interactions between points is pairwise, which intuitively corresponds to considering a limiting process of the death by random connection model studied in \cite{baccelli2014onspatial}, as the response function goes to in the limit to $\infty$ in $B(0,1)$ and $0$ elsewhere.
	
	Several interesting questions arise regarding the evolution of the hard-core model. The first property explored for Markov processes is the question of stochastic stability, which concerns  the existence and uniqueness of  stationary distributions. Further, it is useful to establish temporal-ergodicity for the stationary measure, which means that the tail $\sigma$-algebra under time-shifts is trivial, and consequently time averages of any functional are equal to its stationary expectation. In this paper, we use the technique of \emph{coupling from the past} to prove  that the considered hard-core process is temporally ergodic. We can thus conclude  that the spatial averages of a functional in the stationary setting is equal to the average. 
	
	Another related class of processes studied in the literature are the Random sequential absorption (RSA) model \cite{penrose2001random}, where compact sets are dropped sequentially without replacement. Equivalently, it is a Hard-core birth-death model with pure births.  RSA models originated from the classical car parking model of Renyi \cite{renyi1958on}, where balls of radius $1$, $B_1,\ldots, B_n$, arrive sequentially and uniformly at random in a bounded region, and the $i$-th ball is accepted if it does not overlap with any of the already accepted balls among the $B_{k}$ $k<i$. These have several application in chemistry for studying adsorption of particles on a surface. For finite or infinite domain, the packing efficiency of the limiting process,  is an important parameter for this model. In comparison to the stationary regime of the hard-core model of this paper, it is unclear whether these have a better packing efficiencies.
	
    An alternative approach used in literature for SBD processes is to study the evolution of moment measures (\cite{finkelshtein2012semigroup,finkelshtein2009individual}) starting from the generator for the process. Moment measures are useful in characterizing important average properties of a point process, such as level of clustering or repulsion. Equations governing the evolution of moment measures with time can be derived from generator of the process. This typically yields a infinite family of differential equations, one for each moment measure of order $n$. In the steady state, equating the time derivative to zero yields a hierarchical system of equations satisfied by the moment measures in this regime. The properties of the steady state can in principle be gleaned from these equations. Our procedure for constructing stationary regime bears some resemblance to this approach. Essentially, we develop differential equations for the first moment measures of the discrepancies between two coupled, spatially ergodic hard-core models. We use this to bound the rate of growth of the first moment measures of the discrepancies and show that under some conditions it exponentially decreases to zero. This is sufficient for the coupling from the past argument to work.
    
    In other related work, Penrose \cite{penrose2008existence} gives a framework for showing the existence problem for interacting particle systems on infinite domain. This also encompasses the hard-core model studied in this paper. However, the representation developed there is not suitable to create the stationary regime.
    
    In the following, we begin with a formal definition of the hard-core birth-death  model and show the existence of this process. We then provide some sufficient conditions for the coupling from the past argument to go through. We then provide the detailed construction of a stationary regime for the process using the method of coupling from the past. Finally, we conclude with the result showing that the process with arbitrary initial condition converges in distribution to the distribution of the process in stationary regime. Hence, confirming the existence and uniqueness of the stationary distribution.
	
	\subsection{Notation}
	In this section we specify the notation used throughout this paper. For any set $E\subset \bb{R}^d$, let $M(E)$ denote the space of all simple locally finite counting measures (\cite{daley2007introduction}) on $(E,\mathcal{B}(E))$, where $\mathcal{B}(E)$ is the Borel $\sigma$-algebra on $E$. For $E'\subset E$ and $\gamma \in M(E)$ we denote the restriction of $\gamma$ to $E'$ by $\gamma|_{E'}$. There is a one-to-one correspondence between simple locally finite counting measures and locally finite subsets on a domain $E$, given by the support of a counting measure. In the following we often abuse notation and make this correspondence implicit. Thus, $\gamma\in M(E)$ also denotes a set consisting of the points in its support. 
	
	Let $BM(\bb{R}^d)$ denote the space of bounded measurable functions that vanish outside a bounded set. Let $BM_+(\bb{R}^d)\subset BM(\bb{R}^d)$ denote the subset containing non-negative functions.

	\section{Hard-core birth-death type processes}\label{sec:HCM}
	We consider hard-core birth-death processes on the Euclidean space $\bb{R}^d$.  The population of individuals is modeled as a locally finite point configuration, which can  more practically be represented as a locally finite counting measure \cite{daley2007introduction}. We denote the space of all such configurations by $M(\bb{R}^d)$.  A hard-core birth and death processes, $\{\eta_t\}_{t>\geq 0}$ is a  Markov processes with state space $M(\bb{R}^d)$. As discussed earlier, the points arrive according to an arrival rate function $a:\bb{R}^d\times M(\bb{R}^d)\ra \bb{R}^{+}$. For any bounded region $B\subset\bb{R}^d$, $\int_B a(x,\eta_t)dx$ gives the rate at which points arrive in the region $B$, i.e., in a small interval $dt$ the probability of arrival of a point in $B$ is equal to $\int_Ba(x,\eta_t)dx$. At the instant a point arrives, the local state of the process transforms according to a transition kernel $k$. If a point arrives at location $x$, then $\eta_t$ transitions to $(\eta_t\backslash B(x,1))\cup \nu$ with probability $k(\eta_t\cap B(x,1)-x,\{\nu -x\})$, where $$k\left(\eta_t\cap B(x,1)-x, \bigcup_{\nu\subset \eta_t\cap B(x,1)}\{\nu\}\right)+k\left(\eta_t\cap B(x,1)-x,\{\{0\}\}\right)=1,$$
	i.e., the kernel the transitions only occur to states that are subsets of $\eta_t\cap B(x,1)$ or all points are deleted and only the point at $x$ remains. Let $K(\eta,\eta')=k(\eta\cap B(0,1),\eta'\cap B(0,1))$. The generator of this process has the form:
	\begin{align}\label{eq:genHCM}
	Lf(\eta)=\int_{\bb{R}^d}(K(\eta-x,\eta'-x)f(\eta')-f(\eta))a(x,\eta)dx.
	\end{align}
	Hard-core birth-death processes are completely described by the arrival rate function and the local transition kernel. We refer to the acceptance of a point as a birth and the deletion of a point as a death. Hence, the birth of a point occurs when all points in a radius $1$ around it are killed.
	
	There is class of SBD processes studied in literature that satisfies the hard-core property. In this process, the rate of arrivals is given as above by an arrival rate function and a point is accepted (a birth) if there are no other point is present in a radius of $1$ around it. Additionally, each point has a residual lifetime, which is spent according to a death rate function. In contrast to \cref{eq:genHCM}, the generator of this process is given by:
	\begin{align*}
	Lf(\eta)=\int_{\bb{R}^d}a(x,\eta)\1_{\eta(B(x,1))=0}(f(\eta\cup\{x\})-f(\eta))\ dx +\int_{\bb{R}^d}d(x,\eta)(f(\eta\backslash\{x\})-f(\eta))\eta(dx).
	\end{align*}
	
	
	We focus on a model that satisfies two further properties.  Firstly, we assume that the arrivals occur at rate $1$ uniformly over $\bb{R}^d$. Secondly, when looking on a compact set, the interactions between arriving points and the existing points are pairwise, i.e., for any incoming point $p=(x_p,t_p)$ and every point $q$, with $x_q\in \eta_{t-}\cap B(x_p,1)$, $q$ is deleted if $I_{p,q}=1$ and $p$ is deleted with $I_{q,p}=1-I_{q,p}=1$, where $I_{p,q}$ is an independent Bernoulli random variable with parameter $\rho$, independent of everything else.  We further assume that the order of these pairwise interactions is not important, so any point $p$ is deleted if $I_{q,p}=1$ for some $x_q\in B(x_p,1)$ and $p$ interacts with $q$. The generator of this process defined on $\mathscr{F}=\{F(\eta)=\sum_{x\in \eta}f(x):f\in \mathcal{C}_c(\bb{R}^d)\}$ is then the following:
	
	\begin{align}\label{eq:genHCMpointwise}
	LF(\eta)=\int_{\bb{R}^d}\left(\rho^{\eta(B(x,1))}f(x)-\rho\sum_{y\in \eta\cap B(x,1)}f(y)\ \right)dx.
	\end{align}
	
	\subsection{Existence}
	In this section, we give an explicit construction of a process that has the above generator in \cref{eq:genHCMpointwise}. The approach is similar to the one in \cite{baccelli2014onspatial}, where the authors consider an SBD process with death by random connections model and the construction is based on a backward investigation algorithm. We also note that, while the approach of Penrose \cite{penrose2008existence} is sufficient to show that existence of a process with the above generator, it does not yield a good representation of the process that is helpful in construction of a stationary regime.
	
	We assume that the initial condition $\eta_0$ is a stationary and ergodic point process. For the construction of the process, we assume that arrival events occur according to a homogeneous Poisson point process $N$ on $\bb{R}^d\times \bb{R}^+$ with parameter $\lambda=1$. For any point $p=(x_p,t_p)\in N$, the first coordinate, $x_p$,  denotes the location of the arrival and the second coordinate, $t_p$, denotes the time of arrival. The arrival time for points in $\eta_0$ is set to $0$, so with an abuse of notation we may assume $\eta_0\in M(\bb{R}^d\times \bb{R}^+)$. We assume the existence of marks $I_{p,q}$ for the point $p$, $p,q\in N\cup\eta_{0}$, which are Bernoulli random variables with parameter $\rho$, as described in \Cref{sec:HCM}. The marks $I_{p,q}=I_{q,p}$, but are otherwise independent. A simple use of the consistency theorem shows that one can define a probability space containing such a marked Poisson point processes and the initial point process $\eta_0$.
	
	We build a process $\eta_t\in M(\bb{R}^d)$ that satisfies the following properties almost surely, for any compact set $K\subset \bb{R}^d$:
	\begin{enumerate}
		\item The process $\eta_{t}|_{K}$ is a right-continuous jump process with values in $M(K)$.
		\item If a point $p=(x_p,t_p)$ arrives at time $t=t_p$ and at location $x_p\in K\oplus B(0,1)$, then 
		\begin{align}
		\eta_{t}|_{K\backslash B(x_p,1)}=\eta_{t-}|_{K\backslash B(x_p,1)},\label{eq:pathwise-one}
		\end{align}
		and \begin{align}
		\eta_t|_{K\cap B(x_p,1)}=\delta_{x_p}\prod_{x_q\in \eta_{t-}|_{K\cap B(x_p,1)}} I_{p,q}+\sum_{x_q\in\eta_{t-}|_{K\cap B(x_p,1)}}\delta_{x_q}I_{q,p}\label{eq:pathwise-two}.
		\end{align}
	\end{enumerate} 
	
	We have the following lemma:
	\begin{lemma}\label{lem:existenceshorttime}
		There is an $\epsilon>0$, such that for any hard-core point process $\eta_0$, with probability one, there exists a hard-core birth-death process $\eta_t$, $t\in[0,\epsilon]$, with arrivals from $N$ and the transitions described by \cref{eq:pathwise-one,eq:pathwise-two}.
	\end{lemma}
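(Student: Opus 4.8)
The plan is to fix a compact set $K$, construct the trajectory $(\eta_s|_K)_{s\in[0,\epsilon]}$ by a backward investigation of the arrival process, and then glue these constructions over an exhausting sequence of compact sets. The starting observation is that, by \eqref{eq:pathwise-one} and \eqref{eq:pathwise-two}, this trajectory is a deterministic function of the following data. Let $\mathcal G$ be the graph whose vertices are the arrivals $p=(x_p,t_p)\in N$ with $t_p\le\epsilon$ together with the points of $\eta_0$, and in which $p\sim q$ whenever their spatial locations are at distance at most $1$ and $p,q$ are not both points of $\eta_0$. Then $(\eta_s|_K)_{s\le\epsilon}$ depends only on the vertices of $\mathcal G$ lying in a connected component that meets the set of arrivals landing within distance $1$ of $K$, together with the marks $I_{p,q}$ among those vertices. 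Indeed, an arrival affects $\eta|_K$ only if it lands within distance $1$ of $K$; to evaluate its effect through \eqref{eq:pathwise-two} one needs $\eta_{t_p-}$ on $B(x_p,1)$, which is governed by $\eta_0$ on $B(x_p,1)$, by the earlier arrivals landing in $B(x_p,1)$ (each of which must in turn be resolved), and by whether the $\eta_0$-points of $B(x_p,1)$ are still alive, which is governed by the arrivals landing within distance $1$ of those points; every point appearing in this recursion is a $\mathcal G$-neighbour of the previous one, and points of $\eta_0$ are never recursed through, since they do not interact with one another.

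Next I would show this set of relevant vertices is almost surely finite. Since $\eta_0$ is hard-core, its points are pairwise at distance greater than $1$, so no two of them are joined in $\mathcal G$; consequently, within any connected component of $\mathcal G$, the arrivals form a set that is connected in the graph on $N\cap(\mathbb{R}^d\times[0,\epsilon])$ in which two arrivals are joined when their locations are at distance at most $2$ (a point of $\eta_0$ used as an intermediate vertex links two arrivals lying within distance $1$ of it, hence within distance $2$ of each other). Projecting $N\cap(\mathbb{R}^d\times[0,\epsilon])$ onto $\mathbb{R}^d$ yields a homogeneous Poisson process of intensity $\epsilon$, and the graph just described on it is the continuum percolation (Gilbert) graph with connection radius $2$, whose critical intensity $\lambda_c$ is strictly positive. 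Choose $\epsilon<\lambda_c$. Then almost surely every connected component of this graph is finite, and since almost surely only finitely many of its vertices lie within distance $1$ of $K$, only finitely many components meet that region, so the set of relevant arrivals is almost surely finite; the relevant $\eta_0$-points then lie in the bounded union of the unit balls around these arrivals, hence are finite in number by local finiteness of $\eta_0$.

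On this almost sure event I would construct the trajectory by a finite induction. Enumerate the relevant arrivals in time order as $p_1,\dots,p_m$ with $0<t_{p_1}<\dots<t_{p_m}\le\epsilon$, and fix a bounded set $R\supseteq K$ containing $B(x_{p_i},1)$ for every $i$. Set $\eta_0|_R$ equal to the given configuration, keep $\eta_s|_R$ constant on each interval $(t_{p_{i-1}},t_{p_i})$, and at each $t_{p_i}$ update $\eta|_{B(x_{p_i},1)}$ by keeping the point $x_{p_i}$ with the factor $\prod_q I_{p_i,q}$ and each existing point $x_q$ with the factor $I_{q,p_i}$, as in \eqref{eq:pathwise-two}; all the quantities there are available, since all arrivals and $\eta_0$-points within distance $1$ of $x_{p_i}$ are relevant and the earlier arrivals among them have already been processed. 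Then $\eta_s|_K:=(\eta_s|_R)|_K$ is right-continuous and piecewise constant with values in $M(K)$, and \eqref{eq:pathwise-one}--\eqref{eq:pathwise-two} hold by construction: arrivals landing within distance $1$ of $K$ are among the $p_i$ and are handled by the induction, and any other arrival does not meet $K$ and leaves $\eta|_K$ unchanged. Finally, running this for an increasing sequence $K_n\uparrow\mathbb{R}^d$ and intersecting the countably many almost sure events, the constructions agree on overlaps, because the relevant set for $K_n$ is contained in that for $K_{n+1}$ and already determines $\eta|_{K_n}$; the glued object is the desired process $(\eta_s)_{s\in[0,\epsilon]}$ with values in $M(\mathbb{R}^d)$.

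The one genuinely substantial ingredient is the continuum percolation input used in the second step---that the Gilbert graph of a low-intensity Poisson process is almost surely subcritical, i.e. has only finite components---and the accompanying choice of $\epsilon$ below the corresponding critical intensity. Everything else is bookkeeping, the only point requiring care being that points of the initial configuration can act as bridges between arrivals up to distance $2$ apart, which is why one works with connection radius $2$ and uses the hard-core property of $\eta_0$ to prevent chains of such bridges.
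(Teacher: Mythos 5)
Your proposal is correct and follows essentially the same route as the paper: choose $\epsilon$ so that the spatial projection of $N_{[0,\epsilon]}$ is a subcritical Gilbert graph with connection radius $2$ (the radius $2$ accounting for $\eta_0$-points bridging two arrivals), so that all dependency clusters are finite and acceptance can be resolved by a finite backward/time-ordered induction. Your write-up is in fact more detailed than the paper's, making explicit the use of the hard-core property of $\eta_0$ to prevent chains of bridges and the gluing over an exhausting sequence of compact sets, both of which the paper leaves implicit.
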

	\begin{proof}
		We give an algorithm to construct the process for times $t\in[0,\epsilon]$, where $\epsilon>0$ is fixed later.
		
		To construct $\eta_t$, it is enough to compute whether a point $p\in N_{[0,\epsilon]}$ is accepted when it arrives. We construct a directed dependency graph $G=(V=N_{[0,\epsilon]},E)$, with $(p,q)\in E$ if and only if $x_q \in B(x_p,2)$ and $t_q<t_p$. So that $(p,q)\in E$ implies that whether the point $p$ is accepted at time $t_p$ depends on whether $q$ was accepted. Note that the  radius of influence of a point in $N$ is set to $2$ above, instead of $1$, since a point can interact with a point of $\eta_0$ within a distance $1$ from it, which in turn influences another point of $N$ within a distance $1$. 
		
		The projection of $G$ onto the spatial dimension is a Random geometric graph (\cite{franceschetti2008random}) on the projection of $N_{[0,\epsilon]}$, which is a homogeneous Poisson point process with intensity $\lambda\epsilon$. From theorem $2.6.1$ from \cite{franceschetti2008random}, if $\epsilon$ is small enough so that $\lambda\epsilon$ is less than the critical value, then the graph does not percolate almost surely. This shows that acceptance of any point $p\in N_{[0,\epsilon]}$ can be calculated in finite time using a backward investigation, that sequentially calculates the status of points in the component of $p$ in $G$, sorted according to their time of arrival. This completes the proof.
	\end{proof}
	Notice that in the proof above  $\epsilon$ is independent of the initial conditions. Hence, using the above lemma, we can construct the required process successively on time intervals $[n\epsilon,(n+1)\epsilon]$, $n\in\bb{N}$, starting with any initial condition. This result is summarized in the following corollary.
	\begin{corollary}
		Under the hypothesis of \Cref{lem:existenceshorttime}, there exists a hard-core birth-death process $\eta_t$, $t\in[0,\infty)$, with arrivals in $N$ and local interactions given by \cref{eq:pathwise-one} and \cref{eq:pathwise-two}.  Further, for any time $t>0$, $\eta_t$ is spatially stationary and ergodic if $\eta_0$ is spatially stationary and ergodic.
	\end{corollary}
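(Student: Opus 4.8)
The plan is to obtain the process on $[0,\infty)$ by concatenating the short-time constructions of \Cref{lem:existenceshorttime} over the successive windows $[n\epsilon,(n+1)\epsilon]$, $n\in\bb{N}$, and then to read off spatial stationarity and ergodicity from the observation that this construction is a translation-equivariant deterministic functional of a driving triple whose law is itself spatially stationary and ergodic.

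\emph{Existence on $[0,\infty)$.} I would argue by induction on $n$, the claim being that $\eta_{n\epsilon}$ is defined on a set of full probability and is there a locally finite hard-core configuration. Given $\eta_{n\epsilon}$, apply \Cref{lem:existenceshorttime} with $\eta_{n\epsilon}$ as the initial condition and with the restriction of $(N,\{I_{p,q}\})$ to arrival times in $[n\epsilon,(n+1)\epsilon]$ as the driving data; the backward-investigation algorithm in the proof of the lemma uses only these data, so it produces $\eta_t$ for $t\in[n\epsilon,(n+1)\epsilon]$ satisfying \cref{eq:pathwise-one,eq:pathwise-two}. Two routine checks complete the step: \eqref{eq:pathwise-two} preserves the hard-core property (after a jump, $\eta_t|_{B(x_p,1)}$ is either the singleton $\{x_p\}$ with the rest of the ball cleared, or a subset of $\eta_{t-}|_{B(x_p,1)}$, while \eqref{eq:pathwise-one} leaves the complement unchanged); and the number of arrivals in $K\oplus B(0,1)$ during $[n\epsilon,(n+1)\epsilon]$ is a.s.\ finite, so $\eta_t|_K$ is a right-continuous jump process and $\eta_{(n+1)\epsilon}$ is again locally finite. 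Since $\epsilon$ does not depend on the initial configuration, the induction runs for every $n$; gluing the pieces gives a process on $[0,\infty)$ satisfying properties (1)--(2) on every compact $K$.

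\emph{Stationarity and ergodicity.} Fix $t>0$ and $N$ with $t\in[N\epsilon,(N+1)\epsilon]$, and for $v\in\bb{R}^d$ let $\theta_v$ denote the spatial shift by $v$. On the full-probability event on which the dependency graphs $G$ associated with the windows $[n\epsilon,(n+1)\epsilon]$, $n\le N$, do not percolate, every point has a finite influence component, and a finite backward recursion decides whether it has been born and whether it has since been deleted by time $t$; hence $\eta_t=\Phi_t(\eta_0,N,\{I_{p,q}\})$ for a measurable map $\Phi_t$ with $\Phi_t\circ\theta_v=\theta_v\circ\Phi_t$. The triple $(\eta_0,N,\{I_{p,q}\})$ has a $\{\theta_v\}$-invariant law, since $\eta_0$ is stationary, $N$ is a homogeneous Poisson process, $\{I_{p,q}\}$ is a translation-covariant i.i.d.\ field of marks, and the three are independent; applying $\Phi_t$ therefore yields a spatially stationary $\eta_t$. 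For ergodicity, note that the marked Poisson process $(N,\{I_{p,q}\})$ is mixing under $\{\theta_v\}$ --- events supported on two bounded regions become independent once the regions are far enough apart --- and the independent coupling of the ergodic system $\eta_0$ with a mixing system is ergodic under the diagonal shift; thus $(\eta_0,N,\{I_{p,q}\})$ is ergodic, and so is its equivariant image $\eta_t$.

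The step I expect to require the most care is the representation $\eta_t=\Phi_t(\eta_0,N,\{I_{p,q}\})$ by a measurable, translation-covariant rule: births are delivered directly by the lemma's backward investigation, but to decide deaths one must track, recursively along the (a.s.\ finite) influence component and in increasing order of arrival times, whether each interacting neighbour was still alive at the moment of the arrival in question. Once this bookkeeping is set up, stationarity is immediate, and ergodicity reduces to the standard ergodic-times-mixing fact together with the elementary mixing of a marked homogeneous Poisson process.
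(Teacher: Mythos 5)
Your proposal is correct and follows essentially the same route as the paper: existence by iterating \Cref{lem:existenceshorttime} over the windows $[n\epsilon,(n+1)\epsilon]$ (possible because $\epsilon$ does not depend on the initial configuration), and spatial stationarity and ergodicity by viewing $\eta_t$ as a translation-equivariant functional (a thinning) of the jointly stationary and ergodic driving data $\eta_0$, $N$ and the marks. Your added detail --- in particular the ergodic-times-mixing argument for the ergodicity of the joint input --- only makes explicit what the paper's one-line justification leaves implicit.
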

	\begin{proof}
		The above discussion gives the construction of the process $\eta_t$ for any finite time interval $t\in[0,T]$.
		
		For any $t>0$, $\eta_t$ is stationary and ergodic since it is a translation invariant thinning of the spatially stationary and ergodic projection of the process $N_{[0,t]}\cup \eta_0$.
	\end{proof}
	
	\section{Time Stationarity and Ergodicity}\label{sec:stationary}
	Ergodicity of a Markov process may refer to existence of a unique stationary distribution. Under this condition, the stationary process is ergodic as a dynamical system, i.e. the tail $\sigma$-algebra is trivial. More generally, a stationary probability distribution of a Markov process is ergodic if and only if it is extremal, i.e., if it cannot be written as a convex combination of a other stationary distributions. In this section, under certain assumptions, we construct a stationary regime of the process that is also temporally ergodic. In \Cref{sec:ConvInDist} we show a much stronger property that for any space-translation invariant initial condition, the distribution of the process converges to the distribution of the stationary process constructed.
	
	Our approach for generating through the method of \emph{coupling from the past}, where the key idea is to run the process using a fixed stationary ergodic driving process from time $-T$ until time $t$, and showing that the limit, as $T\ra\infty$, of the random process at time $t$ converges almost surely. The limiting process $\Upsilon_t$, is clearly stationary and ergodic since the driving process as it is a factor of the driving process.
	
	We leverage the technique used in \cite{baccelli2014onspatial} for proving the existence of stationary regimes in the case of death by random connection model. Here, the authors first consider two coupled processes driven by the same arrival process, one process with empty initial conditions and another process a spatially stationary point process. It is shown that the density of the discrepancies in the two processes go to zero exponentially quickly. This property can then be seen to be sufficient to execute a coupling form the past argument. Accordingly, in the following section we define a coupling of two hard-core processes and produce sufficient conditions for exponential decay of density of discrepancies between them. Later in \cref{sec:CFTP} we construct the stationary regime using the coupling from the past argument.

	\subsection{Coupling of two processes; Density of special points}
	Consider a process $\{\eta^1\}_{t\geq0}$ and $\{\eta^2\}_{t\geq0}$ driven by the same homogeneous Poisson point process, $N\in M(\bb{R}^d\times\bb{R}^+)$, with $\eta^1_0=0$ and $\eta^2_0$ being a spatially stationary and ergodic hard-core point process. At any time $t>0$, there are some points that are alive in both processes. These points are called regular points form a stationary point process $R_t$. The remaining points are alive in one of the processes and dead in the other. These points are referred to as special points and the symbol $S_t$ is used to denote the point process formed by these. In particular, the points alive in $\eta^1_t$ and dead in $\eta^2_t$ are called anti-zombies, denoted by $A_t$, and those points alive in $\eta^2_t$ and dead in $\eta^1_t$ are called zombies, denoted by $Z_t$. Thus, 
	\begin{align*}
	R_t=\eta_t^1\cap\eta^2_t,\ S_t=\eta_t^1 \triangle \eta_t^2,\ A_t=\eta_t^1\backslash \eta_t^2,\ \mathrm{and}\	Z_t = \eta_t^2\backslash \eta_t^1.
	\end{align*}
	
	 Now suppose $z\in S_t$ is a special point. Given the realization of $N$, we can build an interaction graph $G$ on the points $S_t\cup N_{(t,\infty)}$, with directed edges $(p,q)$ if and only if $t_p<t_q$, $x_p\in S_{t_q-}\cap B(x_q,1)$ and $I_{p,q}=1$. Let $G_{z}$ be the subgraph of $G$ starting from $z$, called the \emph{family} of $z$. Let $M_{z,t,t+\delta}$ denote the elements of $G_z$ alive at time $t+\delta$, with $m_{z,t,t+\delta}=|M_{z,t,t+\delta}|$.
	 
	 Since at time $t+\delta$ a special point can belong to more than one family, by applying the Mass transport principle (see \cite{daley2007introduction},\cite{last2009invariant}), we get the following bound:
	 
	 \begin{align}
	 \beta_{S_{t+\delta}}\leq\beta_{S_t}\E^0_{S_t}m_{0,t,t+\delta}\label{eq:mass-transport}.
	 \end{align}
	 We have, by superposition principle,
	 \begin{align}
	 \beta_{S_t}\E^0_{S_t}m_{0,t,t+\delta}=\beta_{Z_t}\E^0_{Z_t}m_{0,t,t+\delta}+\beta_{A_t}\E^0_{A_t}m_{0,t,t+\delta}.
	 \end{align}
	 
	 Let $i_z=|N_{(t,t+\delta)}\cap [B(z,1)\times(t,t+\delta)]|$, $j_z=|N_{(t,t+\delta)}\cap [B(z,2)\times(t,t+\delta)]|$ and $D_z$ be the event that the interaction graph of $z$ between points $t$ and $t+\delta$ contains at least two points. Let $\nu_1$ be the volume of the ball of radius $1$. By spatial stationarity we can work with the Palm expectation, assuming that the location of $z$ is at $0$.
	 \begin{align*}
	 \E^0_{Z_t}m_{0,t,t+\delta}\leq \pr^0_{Z_t}(i_0=0)+\pr^0_{Z_t}(i_0=1)\E^0_{Z_t}[m_{0,t,t+\delta}|i_0=1,D_0^c]+\E^0_{Z_t}[\1_{D_0}m_{0,t,+\delta}|D_0].
	 \end{align*}
	 
	 We note that the last term in the above expression is $o(\delta)$. Indeed, $m_{0,t,t+\delta}$ can be stochastically bounded by a pure birth process with birth rates $\lambda_k=k\nu_1$, $k\geq 1$, as each successive family member point increases the coverage area of the family by at-most $\nu_1$. Then using the explicit probability distributions for this pure-birth process it is easy to conclude this result (see \cite{taylor2014introduction} Chapter 5).
	 
	 Hence,
	 \begin{align*}
	 \E^0_{Z_t}m_{0,t,t+\delta}\leq e^{-\nu_1\lambda\delta}+\nu_1\lambda\delta e^{-\nu_1\lambda\delta}\E^0_{Z_t}[m_{0,t,t+\delta}|i_0=1,D_0^c]+o(\delta).
	 \end{align*}

	 Define $\mathfrak{R}_t=\cup_{x\in R_t}B(x,1)$, $\mathfrak{Z}_t=\cup_{x\in Z_t}B(x,1)$ and $\mathfrak{A}_t=\cup_{x\in A_t}B(x,1)$.  We have the following possible disjoint events within $\{i_0=1\}\cap D_0^c$:
	 \begin{enumerate}
	 	\item[$E_1$:] If the point arrives in  $B(0,1)\cap(\mathfrak{R}_t\cup \mathfrak{A}_t)^c$, then $$m_{0,t,t+\delta}=\begin{cases}
	 	2 & \textrm{w.p. }1-\rho\\
	 	0 & \text{w.p. }\rho
	 	\end{cases}.$$
	 	Hence, $\E^0_{Z_t}[m_{0,t,t+\delta}|i_0=1,D_0^c,E_1]=2(1-\rho)$.
	 	\item[$E_2$:] If the point arrives in  $B(0,1)\cap \mathfrak{R}_t\cap\mathfrak{A}_t^c$, then $$m_{0,t,t+\delta}=\begin{cases}
	 	2 & \textrm{w.p. }\rho^k(1-\rho)\\
	 	1 & \text{w.p. }(1-\rho)(1-\rho^k)\\
	 	0 & \text{w.p. }\rho
	 	\end{cases},$$
	 	where $k$ is the number of regular points that interact with the incoming point. Hence, $$\E^0_{Z_t}[m_{0,t,t+\delta}|i_0=1,D_0^c,E_2]\leq(1-\rho^2).$$
	 	\item[$E_3$:] If the point arrives in  $B(0,1)\cap \mathfrak{A}_t\cap\mathfrak{R}_t^c$, then
	 	$$m_{0,t,t+\delta}=\begin{cases}
	 	2 & \textrm{w.p. }\rho^{k}(1-\rho)\\
	 	1 & \text{w.p. }(1-\rho)(1-\rho^k)\\
	 	0 & \text{w.p. }\rho
	 	\end{cases},$$
	 	where $k$ is the number of antizombies that interact with the incoming point. Hence, $$\E_{Z_t}^0[m_{0,t,t+\delta}|i_0,D_0^c,E_3]\leq(1-\rho^2).$$
	 	\item[$E_4$:] If the point arrives in $B(0,1)\cap \mathfrak{R}_t\cap \mathfrak{A}_t$, then
	 	$$m_{0,t,t+\delta}=\begin{cases}
	 	2 & \textrm{w.p. }\rho^{k}(1-\rho)\\
	 	1 & \text{w.p. }(1-\rho)(1-\rho^k)\\
	 	0 & \text{w.p. }\rho
	 	\end{cases},$$
	 	where $k$ is the number of regular and antizombies that the point interacts with. Therefore, $$\E_{Z_t}^0[m_{0,t,t+\delta}|i_0=1,D_0^c,E_4]\leq(1-\rho^2).$$
	 \end{enumerate}
	 
	 Consequently, taking all 4 cases above into account we obtain,
	 \begin{align*}
	 \E^0_{Z_t}[m_{0,t,t+\delta}|N_0=1,D_0^c]&\leq 1-\rho^2+\frac{1}{\nu_1}\E^0_{Z_t}\left((2(1-\rho)-(1-\rho^2))\ell(B(0,1)\cap(\mathfrak{R}_t\cup \mathfrak{U}_t)^c)\right)\\
	 &=1-\rho^2+\left[\frac{(1-\rho)^2}{\nu_1}\E^0_{Z_t}\ell(B(0,1)\cap (\mathfrak{R}_t\cup\mathfrak{A}_t)^c)\right]\label{eq:palm-family}\numberthis\\
	 &\leq 1+\left[1-2\rho-\frac{1}{\nu_1}(1-\rho)^2\E^0_{Z_t}\ell(B(0,1)\cap \mathfrak{R}_t)\right].
	 \end{align*}
	 
	 From \crefrange{eq:mass-transport}{eq:palm-family}, 
	 \begin{align*}
	 \beta_{S_{t+\delta}}- \beta_{S_t}&\leq \beta_{S_t}\left[e^{-\nu_1\lambda\delta}(1+\nu_1\lambda\delta)-1\right.\\
	 &\qquad\qquad\left.+\nu_1\lambda\delta e^{-\nu_1\lambda\delta} \left(1-2\rho- \frac{1}{\nu_1}(1-\rho)^2\E^0_{S_t}\ell(B(0,1)\cap \mathfrak{R}_t)\right)+o(\delta)\right].
	 \end{align*}
	 
	 Therefore, 
	 \begin{align}
	 \frac{1}{\beta_{S_t}}\frac{d\beta_{S_t}}{dt}=\frac{1}{\beta_{S_t}}\limsup_{\delta\ra 0 }\frac{\beta_{S_{t+\delta}}-\beta_{S_t}}{\delta}\leq& \nu_1\lambda \left((1-2\rho)-\frac{1}{\nu_1}(1-\rho)^2\E^0_{S_t}\ell(B(0,1)\cap \mathfrak{R}_t)\right)\label{eq:limsupbeta}\\
	 &\leq \nu_1\lambda (1-2\rho)\label{eq:limsupbeta_2}.
	 \end{align}
	 If $\rho>\frac{1}{2}$, then we see that $\beta_{S_t}$ decreases exponentially to zero, i.e., there exists $c>0$, such that $\beta_{S_t}\leq\beta_{S_0}e^{-ct}$. Exponential convergence will be useful to prove the existence of a stationary regime. This result can be interpreted by noting that the initial case will be cleared sufficiently quickly by new incoming points to reach the steady state.
	 
	 In the following we utilize the geometry of the interactions to gain more from the inequality \ref{eq:limsupbeta}. Let $\kappa$ be the kissing number for balls in $\bb{R}^d$.
	 We now note that \begin{align}
	 \ell(B(0,1)\cap \mathfrak{R}_t)\geq \frac{\nu_1}{4^{d}} \1(R_t(B(0,\frac{3}{2})>0))\geq \frac{\nu_1}{4^{d}(\kappa-1)}R_t(B(0,\frac{3}{2})),\label{eq:boundcoverage}
	 \end{align}
	 where the first inequality follows from the fact that $B(0,1)\cap B(x,1)$ contains a ball of radius $\frac{1}{4}$, if $|x|\leq \frac{3}{2}$;  and the second inequality is true since $R_t(B(0,\frac{3}{2}))$ takes only values $0,1,\ldots,\kappa-1$.
	 
	 Hence, to bound $\E^0_{S_t}\ell(B(0,1)\cap\mathfrak{R}_t)$ from below we calculate bounds on the derivative of $\beta_{S_t}\E^0_{S_t}R_t(B(0,\frac{3}{2}))$. If $C\subset \bb{R}^d$ is a set of measure $1$, then
	 \begin{align*}
	 \beta_{S_t}\E^0_{S_t}R_t(B(0,\frac{3}{2}))=\E\left[\sum_{x\in S_t\cap C}\sum_{y\in R_t\cap B(x,\frac{3}{2})}1\right].
	 \end{align*}
	 The derivative of the above expression depends on rates of increase and decrease of both regular and special points. We now give a lower bound on the derivative by accounting for various types of interactions.
	 \begin{itemize}
	 	\item We first consider the killings (rate of decrease).
	 	\begin{itemize}
	 		\item For each point $x\in S_t\cap C$, a new point could arrive from the Poisson rain and kill $x$ with probability $\rho$. This type of interaction results in a rate equal to $$-\nu_1\lambda\rho \times\E\left[\sum_{x\in S_t\cap C}\sum_{y\in R_t\cap B(x,\frac{3}{2})}1\right]=-\nu_1\lambda\rho\beta_{S_t} \E^0_{S_t}R_t(B(0,\frac{3}{2})).$$
	 		\item For each point $x\in S_t\cap C$ and $y\in R_t\cap B(x,\frac{3}{2})$, a new point could arrive in $B(x,1)\cap \mathfrak{R}_t$, the point $x$ survives, but $y$ is killed. This results in a rate equal to 
	 		\begin{align*}
	 		-\lambda \rho(1-\rho)\times \E\left[\sum_{x\in S_t\cap C}\sum_{y\in R_t\cap B(x,\frac{3}{2})}\ell(B(x,1)\cap B(y,1))\right]&\geq -\lambda\rho(1-\rho)\E\left[\sum_{x\in S_t\cap C}\sum_{y\in R_t\cap B(x,\frac{3}{2})}\nu_1\right]\\
	 		&= -\nu_1\lambda\rho(1-\rho)\beta_{S_t}\E^0_{S_t}R_t(B(0,\frac{3}{2})).  \end{align*} 
	 		\item For each point $x\in S_t\cap C$ and $y\in R_t\cap B(x,\frac{3}{2})$, a new point could arrive in the region $B(y,1)\cap B(x,1)^c$ and kill $y$. This results in a rate of change equal to 
	 		\begin{align*}
	 		-\lambda\rho\times\E\left[\sum_{S_t\cap C}\sum_{y\in R_{t}\cap B(x,\frac{3}{2})}\ell(B(y,1)\cap B(x,1)^c)\right]\geq& -\lambda\rho\E\left[\sum_{S_t\cap C}\sum_{y\in R_{t}\cap B(x,\frac{3}{2})}\nu_1\right]\\
	 		=&-\nu_1\lambda\rho\beta_{S_t}\E^0_{S_t}R_t(B(0,\frac{3}{2})).
	 		\end{align*}
	 	\end{itemize}
	 	\item We now consider the rate of increase:
	 	\begin{itemize}
	 		\item For each $x\in S_t\cap C$, a new point could arrive at $B(x,\frac{3}{2})\backslash B(x,1)$ and compete with other points. It survives and becomes a regular with a probability at least $\rho^\kappa$. This type of interaction results in a rate at least equal to $\rho^\kappa\left(\left(\frac{3}{2}\right)^d-1\right)\nu_1\lambda\beta_{S_t}=c_0\nu_1\lambda\beta_{S_t}$. 
	 		\item We ignore the rate at which new special points are created.
	 	\end{itemize}
	 \end{itemize}
	 Consequently,
	 \begin{align*}
	 \frac{d}{dt}\beta_{S_t}\E^0_{S_t}R_t(B(0,\frac{3}{2}))\geq c_0\nu_1\lambda\beta_{S_t}-\frac{3}{4}\nu_1\lambda\beta_{S_t}\E^0_{S_t}R_t(B(0,\frac{3}{2})).
	 \end{align*}

	 Using the product rule and \cref{eq:limsupbeta_2}, we obtain:
	 \begin{align*}
	 \frac{d}{dt}\E^0_{S_t}R_t(B(0,\frac{3}{2}))\geq& c_0\nu_1\lambda-\nu_1\lambda\rho(3-\rho)\E^0_{S_t}R_t(B(0,\frac{3}{2}))-\frac{1}{\beta_{S_t}}\frac{d\beta_{S_t}}{dt}\E^0_{S_t}R_t(B(0,\frac{3}{2}))\\
	 \geq &c_0\nu_1\lambda-\left(1+\rho-\rho^2\right)\nu_1\lambda\E^0_{S_t}R_t(B(0,\frac{3}{2})).
	 \end{align*}
	 This shows that $$\liminf_{t\ra\infty}\E^0_{S_t}R_t(B(0,\frac{3}{2}))\geq \frac{c_0}{1+\rho-\rho^2}=\frac{\rho^\kappa\left(\left(\frac{3}{2}\right)^d-1\right)}{1+\rho-\rho^2}.$$
	 From \cref{eq:boundcoverage,eq:limsupbeta}, it then follows that $$\lim\sup_{t\ra\infty}\frac{1}{\beta_{S_t}}\frac{d\beta_{S_t}}{dt}\leq \nu_1\lambda\left((1-2\rho)-\frac{\rho^\kappa(1-\rho)^2((\frac{3}{2})^d-1)}{4^d(\kappa-1)(1+\rho-\rho^2)}\right).$$ 
	 
	 Thus, using Gronwall's theorem we have the following result:
	 \begin{theorem}\label{thm:mainsuffcond}
	 	If $$\left((1-2\rho)-\frac{\rho^\kappa(1-\rho)^2((\frac{3}{2})^d-1)}{4^d(\kappa-1)(1+\rho-\rho^2)}\right)<0,$$
	 then there exist constants $\alpha,c>0$ such that for all, $\beta_{S_t}\leq \alpha e^{-c t}$.
	\end{theorem}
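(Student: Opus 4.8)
The plan is to turn the differential inequalities derived above into a Gronwall estimate; the substantive work is already done, and what remains is to package it correctly. The displayed bound obtained just before the theorem says
\[
\limsup_{t\to\infty}\frac{1}{\beta_{S_t}}\frac{d\beta_{S_t}}{dt}\leq \nu_1\lambda\Big((1-2\rho)-\frac{\rho^\kappa(1-\rho)^2((\frac{3}{2})^d-1)}{4^d(\kappa-1)(1+\rho-\rho^2)}\Big),
\]
and the hypothesis of the theorem is exactly that the bracketed quantity is strictly negative (note $1+\rho-\rho^2=1+\rho(1-\rho)>0$ and $\kappa-1\geq 1$, so there is no division issue). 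First I would dispose of the degenerate case: if $\beta_{S_t}=0$ at some time $t$ then $S_t=\emptyset$ almost surely, and since $\eta^1$ and $\eta^2$ are driven by the same marked Poisson process they coincide from $t$ onward, so $\beta_{S_s}=0$ for all $s\geq t$ and the bound is trivial. Hence assume $\beta_{S_t}>0$ for all $t$ and set $g(t):=\log\beta_{S_t}$.

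The main step is then short. Put $-c':=\nu_1\lambda\big((1-2\rho)-\frac{\rho^\kappa(1-\rho)^2((\frac{3}{2})^d-1)}{4^d(\kappa-1)(1+\rho-\rho^2)}\big)<0$. By the $\limsup$ bound there is $T>0$ such that the upper right Dini derivative of $g$ satisfies $D^+g(t)\leq -c'/2=:-c<0$ for all $t\geq T$, whence $\beta_{S_t}\leq \beta_{S_T}e^{-c(t-T)}$ for $t\geq T$. On the compact interval $[0,T]$ the crude pointwise inequality \cref{eq:limsupbeta_2} gives $D^+g(t)\leq\nu_1\lambda(1-2\rho)\leq \nu_1\lambda$, hence $\beta_{S_t}\leq\beta_{S_0}e^{\nu_1\lambda t}\leq\beta_{S_0}e^{\nu_1\lambda T}=:M$ there. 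Combining, $\beta_{S_t}\leq Me^{cT}e^{-ct}$ for $t\in[0,T]$ (since $e^{-ct}\geq e^{-cT}$ on that interval) and $\beta_{S_t}\leq \beta_{S_T}e^{cT}e^{-ct}\leq Me^{cT}e^{-ct}$ for $t\geq T$; so the theorem holds with the above $c$ and $\alpha:=Me^{cT}$.

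The step that genuinely needs care --- and the main obstacle --- is the passage from a pointwise bound on the upper right Dini derivative of $g$ to the integrated exponential inequality, i.e.\ the precise ``Gronwall'' statement being invoked, since $\frac{d\beta_{S_t}}{dt}$ is defined through a $\limsup$ rather than as a classical derivative. This is standard once one knows $t\mapsto\beta_{S_t}$ is continuous, which I would justify by observing that locally in space the coupled dynamics make only finitely many jumps in any bounded time window (the relevant family sizes being stochastically dominated by the pure-birth process with rates $\lambda_k=k\nu_1$ used earlier), so the first moment measures of $\eta^1_t$ and $\eta^2_t$, and in particular the intensity $\beta_{S_t}$, vary continuously in $t$. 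Granting continuity, the Dini-derivative form of Gronwall's inequality applies on $[T,\infty)$ and the remainder is bookkeeping.
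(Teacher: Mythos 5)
Your proof is correct and follows the same route as the paper, which after the displayed $\limsup$ bound simply says ``using Gronwall's theorem''; you spell out that Gronwall step explicitly (splitting at a time $T$ beyond which the logarithmic derivative is bounded by a negative constant, and using the crude bound \cref{eq:limsupbeta_2} on $[0,T]$ to absorb the prefactor). Your extra care about the degenerate case $\beta_{S_t}=0$ and about the continuity of $t\mapsto\beta_{S_t}$ needed for the Dini-derivative form of Gronwall goes beyond what the paper records but does not change the argument.
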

	 This gives a better range of $\rho$, for which coupling from the past argument can be performed.
	 
	 \subsection{Coupling from the Past}\label{sec:CFTP}
	 
	 In this section we construct a stationary regime for the hard-core process using the method of coupling from the past. Consider a doubly infinite Poisson point process $N$ on $\bb{R}^d\times(-\infty,\infty)$ with mean measure $\lambda \ell(dx)\times dt$. Let $\{\theta_{t}\}_{t\in\bb{R}}$ be a group of time-shift operators under which the point process $N$ is ergodic. Let $\eta_t$ be the process starting at time $0$ with empty initial condition. Now, consider the sequence of processes $\{\eta^T_t, t>-T\}_{T\in \bb{N}}$, obtained with empty initial condition from time $-T$ by using $N$, for $T\in\bb{N}$. We have $\eta^T_t=\eta_{t+T}\circ \theta_{-T}$.
	 
	The processes $\eta^1_t$ and $\eta^0_t$ are driven by the same Poisson process beyond time $0$. Treating the points in $\eta^1_0$ as initial conditions for the augmented process (Zombies), if the sufficient conditions of \Cref{thm:mainsuffcond}, hold then the density of special points in the coupling of $\eta^1_t$ and $\eta^0_t$ goes to zero exponentially quickly. The following theorem shows that the two processes coincide on a compact time in finite time with finite expectation. 
	For a compact set $C\subset\bb{R}^d$, let $$\tau(C):=\inf \{t>0:\eta_s^1|_C=\eta^0_s|_C,\ s\geq t\}.$$
	Note that $\tau(C)$ is not a stopping time.
	
		 \begin{lemma}
		 	If for some $c>0$, $\beta_{S_t}\leq \beta_{S_0}e^{-ct}$, then the minimum time $\hat{\tau}(C)$ beyond which $\eta$ and $\hat\eta$ to coincide on the set $C$ has bounded expectation
		 \end{lemma}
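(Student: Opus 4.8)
The plan is to reduce the statement to an exponential tail estimate for the \emph{last} time a discrepancy is visible in a slightly enlarged version of $C$. Write $S_t:=\eta_t\triangle\hat\eta_t$ for the point process of special points, so that $\eta_s|_C=\hat\eta_s|_C$ if and only if $S_s\cap C=\emptyset$, and set
\[
\sigma:=\sup\{t\ge 0:\ S_t\cap C\ne\emptyset\}\qquad(\sup\emptyset:=0).
\]
For every $t>\sigma$ we have $S_s\cap C=\emptyset$ for all $s\ge t$, hence $\hat\tau(C)\le\sigma$ pointwise, so $\E[\hat\tau(C)]=\int_0^\infty\pr(\hat\tau(C)>t)\,dt\le\int_0^\infty\pr(\sigma>t)\,dt$ by the layer-cake identity; it is therefore enough to show $\pr(\sigma>t)\le Ce^{-ct}$ for a finite constant $C$. (We never need $\hat\tau(C)$ to be a stopping time.)

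First I would cover $[t,\infty)$ by the unit intervals $[t+k,t+k+1]$, $k\ge 0$, giving $\pr(\sigma>t)\le\sum_{k\ge 0}\pr\big(\exists\,s\in[t+k,t+k+1]:\ S_s\cap C\ne\emptyset\big)$, and estimate one interval $[u,u+1]$ at a time. If some special point lies in $C$ at a time of this interval, then either $S_u\cap C\ne\emptyset$, or there is a first instant $s_0\in(u,u+1]$ at which the process $S$ acquires a point in $C$. Here one uses locality and finite range of the dynamics: $\eta$ and $\hat\eta$ change only at the arrival times of $N$ and only inside $B(x_p,1)$ for the arriving point $p$, and if $\eta$ and $\hat\eta$ agree on $B(x_p,1)$ just before the arrival of $p$ then, the same marks $I_{p,\cdot}$ being used, they still agree on $B(x_p,1)$ just after it; in particular a regular point can become a zombie or an anti-zombie in $C$ only through such an arrival. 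Hence $s_0$ is the time of an arrival $p\in N$ with $x_p\in C\oplus B(0,1)$ and $S_{s_0-}\cap B(x_p,1)\ne\emptyset$, so with $C':=C\oplus B(0,2)$ we get $B(x_p,1)\subseteq C'$ and $S_{s_0-}\cap C'\ne\emptyset$. Thus the event for the interval is contained in
\[
\{S_u\cap C'\ne\emptyset\}\ \cup\ \big\{\exists\,p\in N:\ x_p\in C',\ t_p\in(u,u+1],\ S_{t_p-}\cap B(x_p,1)\ne\emptyset\big\}.
\]

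It remains to estimate the two pieces. Since $\eta$ and $\hat\eta$ are produced by a translation-covariant rule from the translation-invariant data (the Poisson process $N$ and the initial configurations), $S_v$ is spatially stationary with intensity $\beta_{S_v}$, so $\pr(S_u\cap C'\ne\emptyset)\le\E|S_u\cap C'|=\ell(C')\beta_{S_u}$. For the second piece, Markov's inequality bounds its probability by the expected number of such arrivals, and the Mecke (Campbell) formula for $N$ evaluates this as $\lambda\int_u^{u+1}\!\int_{C'}\pr\big(S_{v-}\cap B(x,1)\ne\emptyset\big)\,dx\,dv\le\lambda\nu_1\ell(C')\int_u^{u+1}\beta_{S_v}\,dv$, using that adding $p$ to $N$ leaves the configuration strictly before $t_p$ unchanged. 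Plugging in the hypothesis $\beta_{S_v}\le\beta_{S_0}e^{-cv}$ with $u=t+k$, each interval contributes at most $C_1 e^{-c(t+k)}$ for a finite constant $C_1$ (depending only on $C,\lambda,\beta_{S_0},c,d$); summing the geometric series in $k$ yields $\pr(\sigma>t)\le C_2 e^{-ct}$, whence $\sigma<\infty$ a.s.\ and $\E[\hat\tau(C)]\le\E[\sigma]\le C_2/c<\infty$.

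The delicate step is the middle one: enumerating \emph{all} the ways a discrepancy can newly appear in $C$ during an interval --- in particular the turning of a previously regular point into a zombie or an anti-zombie --- and checking that each is triggered by an arrival landing within distance $1$ of an already existing special point, so that the enlargement of $C$ to $C\oplus B(0,2)$ really does capture it. Once this bookkeeping is in place, the remaining estimates (spatial stationarity, the Mecke formula, the geometric sum, the layer-cake identity) are routine.
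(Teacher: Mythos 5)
Your argument is correct, and it rests on the same structural fact that drives the paper's proof: a discrepancy can newly appear in $C$ only through an arrival of $N$ landing within distance $1$ both of $C$ and of an already existing special point, so the expected rate at which special points are created in $C$ is of order $\lambda\nu_1\ell(C\oplus B(0,2))\beta_{S_t}$, which is integrable against $t\,dt$ under the hypothesis. The packaging, however, is genuinely different. The paper decomposes $S_t(C)=S_0(C)+S^+[0,t]-S^-[0,t]$ into creation and departure counting processes, dominates $\tau(C)$ by the sum $\int_0^\infty t\,S^-(dt)$ of all departure times, and evaluates that via an integration by parts against $\E\int t\,S^+(dt)$ and $\int_0^\infty\beta_{S_t}\,dt$, with the rate bound on $S^+$ asserted in one line. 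You instead bound $\pr(\tau(C)>t)$ by a union bound over unit time intervals, controlling each interval by spatial stationarity at its left endpoint plus the Mecke formula for the creation events; the two computations are linked by Fubini, since your $\int_0^\infty\sum_k\E[\#\{\text{creations in }[t+k,t+k+1]\}]\,dt$ is essentially the paper's $\E\int t\,S^+(dt)$. What your version buys is twofold: it makes the key locality step rigorous rather than asserted (and your enlargement $C\oplus B(0,2)$ is in fact the correct one --- a special point at distance between $1$ and $2$ from $C$ can convert a regular point of $C$ via an intermediate arrival, a case the paper's $C\oplus B(0,1)$ does not capture, though this only changes a constant), and it yields the strictly stronger conclusion that $\tau(C)$ has an exponential tail rather than merely a finite mean.
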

		 \begin{proof}
		 	We view $S_t(C)$, $t>0$, as a simple birth-death process. Let $S_t(C)=S_0(C)+S^+[0,t]-S^-[0,t]$, where $S^+$ and $S^-$ are point processes on $\bb{R}^+$ with the following properties:
		 	\begin{enumerate}
		 		\item $S^+$ is a simple counting process, with a jump indicating the arrival of a new special point in $C$.
		 		\item $S^-$ is a counting process, with a jump indicating the departure of corresponding number of special points from $C$.
		 	\end{enumerate}
		 	Since special points result from interaction of arriving points with existing special points, the rate of increase in $S^+$ is bounded by $S_t(C\oplus B(0,1))\times \lambda \nu_1$. Hence,
		 	\begin{align*}
		 	\E S^+[0,\infty)&\leq\lambda\nu_1\int_{0}^{\infty}\E S_t(C\oplus B(0,1))dt\\
		 	&=\lambda \nu_1\ell(C\oplus B(0,1))\int_0^\infty \beta_{S_t}dt<\infty.
		 	\end{align*} 
 	 This also shows that $S^+[0,\infty)$ and $S^-[0,\infty)$ exist and are finite a.s. Thus $\lim_{t\ra\infty}S_t(C)$ also exists and are finite. From the fact that $\lim_{t\ra\infty}\E S_t(C)=\lim_{t\ra\infty}\beta_{S_t}\ell(C)=0$, by dominated convergence theorem, we have $\E \lim_{t\ra\infty}S_t(C)=0$. Thus, $\lim_{t\ra\infty}S_t(C)=0$ a.s. This also shows that $\tau(C)<\infty$ a.s.
 	 
 	 Let $S$ be the random measure on $\bb{R}^+$, with $S[0,t]=S_t(C)$ for all $t\geq 0$. We have
 	 \begin{align*}
 	 \E \tau(C)&\leq \E\int_0^\infty t S^-(dt)\\
 	 &=\E\int_0^\infty  tS^+(dt)-\E\int_0^\infty tS(dt)\\
 	 &\leq \lambda\nu_1\ell(C\oplus B(0,1))\int_0^\infty t\beta_{S_t}dt +\E \int_0^\infty S[0,t]dt\\
 	 &=\lambda\nu_1\ell(C\oplus B(0,1))\int_0^\infty t\beta_{S_t}dt +\ell(C)\int_0^\infty \beta_{S_t}dt<\infty.
 	 \end{align*} 
 	\end{proof}
 	 
 	 Now, let $V_y^{T}$ denotes the time at which the executions of processes $\eta_t^T$ and $\eta_t^{T+1}$ coincide in $B(y,1)$, i.e., $V_y^{T}=\tau(B(y,1))\circ\theta_{-T}-T$, then we have:
	 \begin{align}
	 V_y^{T}+T=V^{0}_y\circ\theta_{-T}=\tau(B(y,1))\circ\theta_{-T}.\label{eq:ergcouplingtime}
	 \end{align}
	 Consequently we have the following lemma
	 \begin{lemma}
	 	If $\E\tau(B(y,1))<\infty$, then
	 	\begin{align}
	 	\lim_{T\ra\infty}V_y^{T}=-\infty.
	 	\end{align}
	 \end{lemma}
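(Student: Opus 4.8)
The plan is to recognize the claim as the standard fact that an integrable random variable, evaluated along a measure-preserving sequence of maps, grows sublinearly; the conclusion $V_y^{T}\ra-\infty$ is then immediate from \eqref{eq:ergcouplingtime}. Throughout I read the statement as an almost sure convergence.

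First I would rewrite \eqref{eq:ergcouplingtime} in the form $V_y^{T}=\tau(B(y,1))\circ\theta_{-T}-T$, and set $X:=\tau(B(y,1))$. By the preceding lemma $X$ is a nonnegative random variable that is almost surely finite, and by hypothesis $\E X<\infty$; since $\theta_{-T}$ preserves the law of $N$, the shifted quantity $X\circ\theta_{-T}$ is likewise a.s. finite for every $T\in\bb{N}$. Because $V_y^{T}=T\big(T^{-1}X\circ\theta_{-T}-1\big)$, it suffices to prove that $T^{-1}X\circ\theta_{-T}\ra 0$ almost surely as $T\ra\infty$ along the integers.

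For the sublinear growth I would run a Borel--Cantelli argument. Fix $\epsilon>0$. Measure preservation of $\theta$ gives $\pr(X\circ\theta_{-T}\geq\epsilon T)=\pr(X\geq\epsilon T)$ for each $T$, and the usual tail-sum bound for nonnegative random variables yields $\sum_{T\geq1}\pr(X\geq\epsilon T)\leq\epsilon^{-1}\E X<\infty$. Hence $\sum_{T\geq1}\pr(X\circ\theta_{-T}\geq\epsilon T)<\infty$, so by Borel--Cantelli one has $X\circ\theta_{-T}<\epsilon T$ for all but finitely many $T$ almost surely, i.e. $\limsup_{T\ra\infty}T^{-1}X\circ\theta_{-T}\leq\epsilon$ a.s. Intersecting these almost sure events over $\epsilon=1/n$, $n\in\bb{N}$, gives $\lim_{T\ra\infty}T^{-1}X\circ\theta_{-T}=0$ a.s., and therefore $V_y^{T}=T\big(T^{-1}X\circ\theta_{-T}-1\big)\ra-\infty$ a.s.

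I do not expect a genuine obstacle: the statement is a soft consequence of integrability together with stationarity of the driving process. The only points to keep in mind are that $T$ ranges over $\bb{N}$ (so the Borel--Cantelli sum is over a bona fide sequence, not a continuum), and that one must invoke the previous lemma to ensure $\tau(B(y,1))$ is a.s. finite so that the shifted quantities are well defined; notably, only measure preservation of $\theta_{-T}$ is used here, not ergodicity of $\{\theta_t\}_{t\in\bb{R}}$.
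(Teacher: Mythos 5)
Your proof is correct, but it takes a different route from the paper. The paper observes that $V_y^{T}+T=\tau(B(y,1))\circ\theta_{-T}$ is a stationary (and, by assumption on $N$, ergodic) sequence and applies Birkhoff's pointwise ergodic theorem: the Ces\`aro averages $\frac{1}{T}\sum_{i=0}^{T}|V_y^{i}+i|$ converge a.s.\ to the finite limit $\E\tau(B(y,1))$, whence the last summand satisfies $\frac{V_y^{T}+T}{T}\ra 0$ a.s.\ and $V_y^{T}\ra-\infty$. You instead prove the same sublinearity $\frac{1}{T}\,\tau(B(y,1))\circ\theta_{-T}\ra 0$ a.s.\ directly by the first Borel--Cantelli lemma combined with the tail-sum bound $\sum_{T\geq 1}\pr(X\geq \epsilon T)\leq \epsilon^{-1}\E X$, then intersect over $\epsilon=1/n$. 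Both arguments are standard and complete; yours is more elementary in that it invokes no ergodic theorem and, as you note, uses only measure preservation of the shifts rather than ergodicity (the paper's route also does not truly need ergodicity --- without it Birkhoff still gives an a.s.\ finite limit, namely the conditional expectation given the invariant $\sigma$-algebra, which suffices --- but the paper states it with the ergodic identification). The paper's argument is shorter on the page given that Birkhoff is already in its toolkit for the coupling-from-the-past construction; your argument isolates exactly the minimal hypothesis being used.
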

	 \begin{proof}
	 	By \cref{eq:ergcouplingtime}, $V_y^{T}+T$ is a stationary and ergodic sequence. Hence, by Birkhoff's pointwise ergodic theorem, 
	 	\begin{align*}
	 	\lim_{T\ra\infty}\sum_{i=0}^T \frac{|V_y^{i}+i|}{T}=&\E\tau(B(y,1))<\infty,\ \mathrm{a.s.}
	 	\end{align*}
	 	Therefore the last term in the summation, $\frac{V_y^{T}+T}{T}\ra 0$ a.s., as $T\ra\infty$. This implies the desired result that
	 	\begin{align*}
	 	 \lim_{T\ra\infty }V_y^{T}=&-\infty,\ \mathrm{a.s.}
	 	\end{align*}
	 \end{proof}
	 
	 Thus, for every realization of $N$, any compact set $C$ and $t\in \bb{R}$, there exists a $k\in\bb{N}$ such that for all $T>k$, $\tau(C)\circ\theta_{-T}-T<t$. That is, for $T>k$, the execution of all the processes starting at $-T$ coincide at time $t$ on the compact set $C$. Hence the limit
	 \begin{align}
	 \Upsilon_t=\lim_{T\ra\infty}\eta_{t+T}\circ\theta_{-T}
	 \end{align}
	 is well defined. Further, 
	 \begin{align*}
	 \Upsilon_t \circ\theta_1&=\lim_{T\ra\infty}\eta_{t+T}\circ\theta_{-T+1}\\
	 &=\lim_{T\ra\infty }\eta_{t+1+T-1}\circ\theta_{-T+1}\\
	 &=\Upsilon_{t+1}.
	 \end{align*}
	 So, the process $\Upsilon$ is $\{\theta_n\}_{n\in \bb{Z}}$ compatible. In fact, $\Upsilon$ is also $\{\theta_s\}_{s\in\bb{R}}$ compatible and temporally ergodic, since it is a factor of the driving process $N$.
	 
	 \subsection{Convergence in Distribution}\label{sec:ConvInDist}
	 Let $\{\eta^t\}_{t\geq 0}$ be a Hard-core process driven by a homogeneous Poisson point process $N'$ on $\bb{R}^d\times \bb{R}^+$, as described in \Cref{sec:HCM}, with ergodic initial conditions. Let $\Upsilon_0$ be stationary regime of the process at time zero. Consider the process $\hat \eta_t$ with initial condition $\hat{\eta}_0=\Upsilon_0$ and being driven by the point process $N'$. Note that $\hat{\eta}_t\stackrel{d}{=}\Upsilon_t\stackrel{d}{=}\Upsilon_0$. If the conditions of \Cref{thm:mainsuffcond} are satisfied then we can conclude that the density of the discrepancies between the two processes vanishes exponentially to zero. This gives the following quantitative estimate on the difference of the Laplace functional $L_t$ and $\hat L_t$ of $\eta_t$ and $\hat{\eta}_t$ respectively.
	 \begin{lemma}
	 	Let $S_t=\eta_t\triangle\hat{\eta}_t$. If there exists $c>0$ such that $\beta_{S_t}\leq \beta_{S_0}e^{-ct}$, then for any $f\in BM_+(\bb{R}^d)$, we have 
	 	\begin{align}
	 	|L_{t}(f)-\hat L_{t}(f)|\leq \|f\|_{L^1}\beta_{S_0}e^{-ct}
	 	\end{align}
	 \end{lemma}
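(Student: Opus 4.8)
The plan is to exploit the coupling directly: since $\eta_t$ and $\hat\eta_t$ are driven by the same Poisson process $N'$ on a common probability space, they agree on the regular points and differ only on $S_t=\eta_t\triangle\hat\eta_t$. Writing $R_t=\eta_t\cap\hat\eta_t$, $A_t=\eta_t\setminus\hat\eta_t$ and $Z_t=\hat\eta_t\setminus\eta_t$, one has $\sum_{x\in\eta_t}f(x)=\sum_{x\in R_t}f(x)+\sum_{x\in A_t}f(x)$ and $\sum_{x\in\hat\eta_t}f(x)=\sum_{x\in R_t}f(x)+\sum_{x\in Z_t}f(x)$, so that, pointwise on this probability space,
\[
e^{-\sum_{x\in\eta_t}f(x)}-e^{-\sum_{x\in\hat\eta_t}f(x)}=e^{-\sum_{x\in R_t}f(x)}\Bigl(e^{-\sum_{x\in A_t}f(x)}-e^{-\sum_{x\in Z_t}f(x)}\Bigr).
\]
Since $L_t(f)-\hat L_t(f)=\E\bigl[e^{-\sum_{x\in\eta_t}f(x)}-e^{-\sum_{x\in\hat\eta_t}f(x)}\bigr]$, it then suffices to control the right-hand side in absolute value and take expectations.

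The next step uses two elementary facts, both relying on $f\ge 0$: first $e^{-\sum_{x\in R_t}f(x)}\le 1$; second $|e^{-a}-e^{-b}|\le|a-b|$ for $a,b\ge0$, because $x\mapsto e^{-x}$ is $1$-Lipschitz on $[0,\infty)$. Combining these and using $f\ge0$ once more to discard the cancellation between $A_t$ and $Z_t$,
\[
\Bigl|e^{-\sum_{x\in\eta_t}f(x)}-e^{-\sum_{x\in\hat\eta_t}f(x)}\Bigr|\le\Bigl|\sum_{x\in A_t}f(x)-\sum_{x\in Z_t}f(x)\Bigr|\le\sum_{x\in A_t}f(x)+\sum_{x\in Z_t}f(x)=\sum_{x\in S_t}f(x).
\]
Taking expectations and applying the triangle inequality gives $|L_t(f)-\hat L_t(f)|\le\E\bigl[\sum_{x\in S_t}f(x)\bigr]$. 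Because the coupling through $N'$ commutes with spatial translations and both initial configurations ($\eta_0$ and $\hat\eta_0=\Upsilon_0$) are spatially stationary, $S_t$ is a spatially stationary point process with intensity $\beta_{S_t}$, which is finite under the hypothesis; Campbell's formula then yields $\E\bigl[\sum_{x\in S_t}f(x)\bigr]=\beta_{S_t}\int_{\bb{R}^d}f(x)\,dx=\beta_{S_t}\|f\|_{L^1}$, the interchange of sum and expectation being justified since $f$ vanishes outside a bounded set and $\beta_{S_t}<\infty$. Inserting $\beta_{S_t}\le\beta_{S_0}e^{-ct}$ finishes the argument.

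There is essentially no hard step here: the estimate is a one-line Lipschitz bound followed by Campbell's formula, and all the analytic work has already been done in establishing the exponential decay of $\beta_{S_t}$. The only point deserving care is the bookkeeping that lets us identify the intensity of $S_t$ with exactly the quantity $\beta_{S_t}$ controlled in the previous sections — i.e., that the pathwise coupling of $\eta_t$ and $\hat\eta_t$ by $N'$ really produces a jointly translation-invariant pair, so that $S_t=\eta_t\triangle\hat\eta_t$ is stationary under the full spatial translation group rather than merely a sparser subgroup — and, relatedly, making sure the nonnegativity of $f$ is invoked in both places where it is needed (to bound $e^{-\sum_{x\in R_t}f(x)}$ by $1$ and to pass from $|\sum_{A_t}f-\sum_{Z_t}f|$ to $\sum_{S_t}f$).
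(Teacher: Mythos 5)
Your proposal is correct and follows essentially the same route as the paper: both arguments reduce $|L_t(f)-\hat L_t(f)|$ to $\E\int f\,dS_t$ (the paper via the elementary bound $1-e^{-u}\leq u$ applied after factoring the Laplace functionals over $\eta_t\cap\hat\eta_t$ and the symmetric difference, you via the equivalent $1$-Lipschitz estimate for $e^{-x}$ on $[0,\infty)$), and then both conclude with Campbell's formula and the exponential decay of $\beta_{S_t}$. Your write-up is if anything slightly more careful about where nonnegativity of $f$ is used and about why the intensity of $S_t$ is exactly $\beta_{S_t}$.
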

	 \begin{proof}
	  Let $f\in BM_{+}(\bb{R}^d)$. Then,
	  \begin{align*}
	  L_t(f)-\hat{L}_t(f)=&\E \left[e^{-\int f(x)\eta_{t}(dx)}\left(1-\prod_{x\in\hat \eta_t\backslash\eta_t}e^{-f(x)}\prod_{x\in\eta_t\backslash\hat\eta_t}e^{f(x)} \right)\right]\\
	  \leq &\E \left[e^{-\int f(x)\eta_{t}(dx)}\left(\int f(x)\hat{\eta}_t\backslash\eta_t(dx)-\int f(x)\eta_t\backslash\hat\eta_t(dx)\right)\right].
	  \end{align*}
	  Similarly,
	  \begin{align*}
	  L_t(f)-\hat{L}_t(f)=&\E \left[e^{-\int f(x)\hat \eta_{t}(dx)}\left(\prod_{x\in\hat \eta_t\backslash\eta_t}e^{f(x)}\prod_{x\in\eta_t\backslash\hat\eta_t}e^{-f(x)}-1 \right)\right]\\
	  \geq &\E \left[e^{-\int f(x)\hat\eta_{t}(dx)}\left(\int f(x)\hat{\eta}_t\backslash\eta_t(dx)-\int f(x)\eta_t\backslash\hat\eta_t(dx)\right)\right].	  
	  \end{align*}
	  Hence,
	  \begin{align*}
	  |L_t(f)-\hat L_t(f)|&\leq \E \left[\max\left\{e^{-\int f(x)\hat\eta_{t}(dx)},e^{-\int f(x)\eta_{t}(dx)}\right\}\left|\int f(x)\hat{\eta}_t\backslash\eta_t(dx)-\int f(x)\eta_t\backslash\hat\eta_t(dx)\right|\right]\\
	  &\leq \E\left|\int f(x)\hat{\eta}_t\backslash\eta_t(dx)-\int f(x)\eta_t\backslash\hat\eta_t(dx)\right|\\
	  &\leq\E\int f(x) S_t(dx)\\
	  &= \beta_{S_t}\|f\|_{L^1}\\
	  &\leq\beta_{S_0}\|f\|_{L^1}e^{-ct}.
	  \end{align*}
	 \end{proof}
	 
	 Since point-wise convergence of Laplace functional also implies convergence in distribution, we can conclude that $\eta_t$ converges weakly to $\Upsilon_0$ as $t\ra\infty$.
	 
	 \section{Concluding Remarks}
	 In this paper we focused on the Hard-core model on an infinite domain where the interactions are pairwise. It was shown that under the conditions of \Cref{thm:mainsuffcond}, a stationary regime exists. It is still to be seen whether this result can be proved for a more general class of hard-core models. In particular, for pairwise interaction processes we conjecture that better bounds on the decay rate of special points can be obtained for all $\rho >0$.
	 
	 The existence of a stationary regime for a more general class of hard-core models can be conjectured, and it needs to be seen whether the above arguments can be stretched to prove their existence. Particularly, in the case of pairwise interaction models studied above, improved bounds could possibly be obtained to get fast enough convergence of the density of specials.  Further, for general interaction models, differential equations for higher order moment measures might be necessary for controlling the decay in density of special points. One probably needs to look at methods other than the coupling from the past scheme in the regime where $\rho\ra 0$, as in the RSA scheme, which corresponds to $\rho=0$, the method of coupling from the past clearly fails, while a stationary regime exists. 
	 
	 It would also be interesting to obtain estimates for the packing efficiencies of the hard-core processes considered here, in their stationary regimes. The dependence of the packing efficiency on the parameter $\rho$ is also unclear. Further, packing efficiencies of other hard-core point processes need to be compared with the hard-core spatial birth-death processes. In particular, one class of  processes where the hard-core structure shows up are the Mat\'ern type-I and type-II processes \cite{matern1960spatial}. These processes can be considered as a dependent thinning of Poisson point processes, based on a retention rule. The packing efficiencies of these Mat\'ern processes are known, and they form an interesting class for comparison of packing efficiencies.
	 	 \section*{Acknowledgments}
	 	 The author would like to thank his PhD advisor, Fran\c{c}ois Baccelli, for many valuable discussions on this problem.
\printbibliography 
	
\end{document}